\newtheorem{theorem}{Theorem}
\title{Locally finite graphs and their localization numbers}
\author[A.\ Bonato]{Anthony Bonato}
\author[F.\ Lehner]{Florian Lehner}
\author[T.G.\ Marbach]{Trent G.\ Marbach}
\author[JD Nir]{JD Nir}
\address[A1,A3]{Toronto Metropolitan University, Toronto, Canada}
\address[A2]{University of Auckland, Auckland, New Zealand}
\address[A4]{Oakland University, Rochester, U.S.A.}
\email[A1]{(A1) abonato@torontomu.ca}
\email[A2]{(A2) florian.lehner@auckland.ac.nz}
\email[A3]{(A3) trent.marbach@torontomu.ca}
\email[A4]{(A4) jdnir@oakland.edu}
\begin{document}

\keywords{localization number, pursuit-evasion games, locally finite graphs}
\subjclass{05C57,05C63,05C12}

\maketitle

\begin{abstract}
We study the Localization game on locally finite graphs trees, where each of the countably many vertices have finite degree. In contrast to the finite case, we construct a locally finite tree with localization number $n$ for any choice of positive integer $n$. Our examples have uncountably many ends, and we show that this is necessary by proving that locally finite trees with finitely or countably many ends have localization number at most 2. Finally, as is the case for finite graphs, we prove that any locally finite graph contains a subdivision where one cop can capture the robber. 
\end{abstract}

\section{Introduction}

Pursuit-evasion games are most commonly studied on finite graphs, but various studies have also considered the infinite case, such as \cite{bht,hahn,ILW22,fL16}; see also Chapter~7 of \cite{BN11}. The difference between pursuit-evasion games such as the Localization game or Cops and Robbers on finite and infinite graphs is that in the infinite case, the evader may avoid capture by moving along an infinite path without ever visiting the same vertex twice.  In this paper, we present the first extensive study of the Localization game on \emph{locally finite} graphs, that is, infinite graphs in which every vertex has finite degree.\footnote{The present paper is the full version of the extended abstract \cite{BLNM} from EUROCOMB'23.} 

All graphs considered in this paper are simple, connected, and locally finite. The reader is directed to \cite{aB22,rD17} for additional background on graph theory and infinite graphs.

The Localization game was first introduced for one cop independently by Carraher et al.\ \cite{CCDEW12} and by Seager~\cite{sS12, sS14}, and was subsequently studied in several papers such as~\cite{BBHMP22, BHM21, BK20, BGGNS18a, BGGNS18b, BDELM17}. It is a game played between two players playing on a graph. One player controls a set of $k$ \emph{cops}, and the other controls a single \emph{robber}. The players play over a sequence of discrete time-steps; a \emph{round} of the game is a move by the cops and the subsequent move by the robber. The players move on alternate time-steps, with the robber going first. In the first round, the robber occupies a vertex of the graph, and in each subsequent round, they may move to a neighboring vertex or remain on their current vertex. Each move of the cops consists of occupying a set of vertices $u_1, u_2, \ldots, u_k$, and sending out a \emph{cop probe} from each of these vertices. Each cop probe returns the distance $d_i$ from $u_i$ to the robber. We refer to $D = (d_1, d_2, \ldots , d_k)$ as the distance vector of cop probes. Note that the cops are not limited to moving to neighboring vertices. Relative to the cops’ position, there may be more than one vertex $x$ with the same distance vector. We refer to such a vertex $x$ as a \emph{candidate} of $D$ or simply a \emph{candidate}.

The cops win if they have a strategy to determine, after a finite number of rounds, a unique candidate, at which time we say that the cops {\em capture} the robber. The robber wins by evading capture indefinitely. We assume the robber is \emph{omniscient}, in that they know the entire strategy for the cops. For a graph $G$, the \emph{localization number} of $G$, written $\zeta(G)$, is the smallest cardinal for which $k$ cops have a winning strategy.  For further background on the localization number of finite graphs, see Chapter~5 of \cite{aB22}.

We present new results on the localization number of locally finite graphs and trees, paying particular attention to whether results persist or change from the finite case. As connected, locally finite graphs are countable, $\zeta(G)$ is either a positive integer or the first infinite cardinal, $\aleph_0.$ 

For finite trees, it is known that the localization number is at most 2; see \cite{sS14}. In contrast, in Section~2, we construct a locally finite tree with localization number $n$ for any choice of $n$, where $n$ is a positive integer or $\aleph_0.$ These constructions have uncountably many ends; the precise definition of ends will be given later, but for trees, they can be thought of as ``infinite branches'' of the tree. We also prove that trees with countably many ends have localization number of at most 2, giving a broad generalization of the same bound for finite trees. 

If $uv$ is an edge in a graph $G$, then \emph{subdividing} $uv$ means deleting the edge $uv$ and adding a path between $u$ and $v$ whose internal vertices are disjoint from $V(G)$. We say that an edge is subdivided $k$ times if this path has $k$ internal vertices. A graph derived from $G$ by a sequence of subdivisions is called a \emph{subdivision} of $G.$
In Section~3, we consider more general locally finite graphs. For finite graphs, it was shown in \cite{CCDEW12} that some subdivision has localization number 1. We extend this result to the locally finite case and prove that each locally finite graph has a subdivision with localization number 1. 

\section{Trees}

Although determining the localization number for general graphs is \textbf{NP}-hard~\cite{BGGNS18a}, the following theorem of Seager fully characterizes the localization number of finite trees. Let $\hat{T}$ be the tree depicted in Figure~\ref{ft3}.

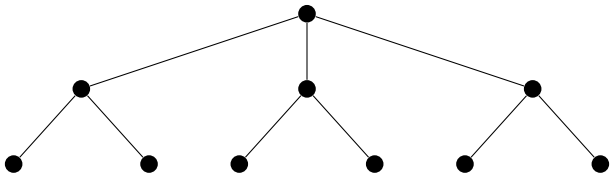
\begin{figure}[ht!]
\centering
\begin{tikzpicture}
\node[fill,circle, scale=0.6] (r) at (0,0) {};
\node[fill,circle, scale=0.6] (a) at (-3,-1) {};
\node[fill,circle, scale=0.6] (b) at (0,-1) {};
\node[fill,circle, scale=0.6] (c) at (3,-1) {};
\node[fill,circle, scale=0.6] (a1) at (-3.9,-2) {};
\node[fill,circle, scale=0.6] (a2) at (-2.1,-2) {};
\node[fill,circle, scale=0.6] (b1) at (-0.9,-2) {};
\node[fill,circle, scale=0.6] (b2) at (0.9,-2) {};
\node[fill,circle, scale=0.6] (c1) at (2.1,-2) {};
\node[fill,circle, scale=0.6] (c2) at (3.9,-2) {};

\draw (a1) -- (a) -- (a2);
\draw (b1) -- (b) -- (b2);
\draw (c1) -- (c) -- (c2);
\draw (a) -- (r) -- (b);
\draw (c) -- (r);
\end{tikzpicture}
\caption{The tree $\hat{T}$.}\label{ft3}
\end{figure}

A graph is $H$-\emph{free} if it does not contain $H$ as an induced subgraph.
\begin{theorem}[\cite{sS14}] \label{thm:seager}
If $T$ is a finite tree, then $\zeta(T) = 1$ if $T$ is $\hat{T}$-free, and $\zeta(T)=2$ otherwise.
\end{theorem}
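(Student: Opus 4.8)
The plan is to split the statement into three claims and then combine them: (i) every finite tree satisfies $\zeta(T)\le 2$; (ii) if $T$ contains $\hat{T}$ as an induced subgraph then $\zeta(T)\ge 2$; and (iii) if $T$ is $\hat{T}$-free then $\zeta(T)\le 1$. Since any tree with at least two vertices has $\zeta\ge 1$, part (iii) yields $\zeta(T)=1$ in the $\hat{T}$-free case, while (i) and (ii) together give $\zeta(T)=2$ otherwise. Before anything else I would record the structural meaning of containing $\hat{T}$: because $T$ is a tree, an induced copy of $\hat{T}$ is exactly a vertex $r$ together with three neighbours $a,b,c$ each having at least two further neighbours. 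Equivalently, $T$ is $\hat{T}$-free if and only if no vertex has three neighbours of degree at least $3$. This reformulation is what links the two bounds.

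For (ii) I would give the robber an explicit strategy keeping the candidate set of size at least two forever. The key simplification is that the distances from an arbitrary probe $u\in V(T)$ to the ten vertices of a fixed induced copy of $\hat{T}$ depend only on the projection (nearest vertex) $g$ of $u$ onto that copy: for special vertices $s,s'$ one has $d(u,s)-d(u,s')=d(g,s)-d(g,s')$, so as far as separating candidates inside $\hat{T}$ is concerned, the cop may as well probe a vertex of $\hat{T}$. Keeping the robber inside the copy (extra candidates in $T$ only help), it suffices to exhibit a robber-invariant family $\mathcal{F}$ of candidate sets in $\hat{T}$, each of size at least $2$, so that for every set in $\mathcal{F}$ and every probe there is a response leaving the new candidate set again in $\mathcal{F}$. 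The naive guess, a pair of sibling leaves, fails, since probing one of the two leaves separates them; the main work is to see that the robber must also use the centre $r$ and the degree-$3$ vertices $a,b,c$ to rotate among the three branches. Concretely I would take $\mathcal{F}$ to consist, up to the symmetry group permuting branches and swapping leaves within a branch, of the six leaves, the three vertices $a,b,c$, any two of $a,b,c$, a sibling pair together with $r$, and a single leaf together with $r$; I would then verify closure by a finite case check, the crucial point being that the robber can always avoid the two \emph{losing} configurations (a lone sibling pair, or four leaves spread over only two branches), each of which the cop finishes in one or two further probes.

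For (iii) I would root $T$ and have the single cop drive the candidate set down the tree while keeping it concentrated in one descending subtree. In each phase the cop probes to locate the robber's depth and to find the branch vertex currently carrying the ambiguity; the condition that no vertex has three neighbours of degree at least $3$ is exactly what rules out the unresolvable three-way branch that the robber exploits in (ii), so a single further probe always resolves the branch. Tracking a suitable potential (for instance combining the number of candidates with the diameter of the candidate set) and checking that the cop's probe more than compensates for the one-step spreading caused by the robber's move, the potential decreases until a single candidate remains. For (i), which I regard as the routine part, two cops suffice on every finite tree: maintain a subtree known to contain the robber, use one cop to pin the robber's distance from the apex of that subtree and the second to identify which child-subtree the robber lies in, then iterate; since each round strictly shrinks the subtree and the tree is finite, the cops capture the robber.

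The hard part is (ii): the correct invariant family is not obvious, and one appreciates the role of $r$ and of the mid-level vertices only after observing that every family built purely from leaves is forced, within two or three probes, into a position the cop wins. Once $\mathcal{F}$ is guessed, the verification is a finite, symmetry-reduced computation. The second most delicate point is the bookkeeping in (iii), where one must ensure the robber cannot use its move to re-create an ambiguity the cop has just resolved; this is precisely where $\hat{T}$-freeness, interpreted as the absence of an unresolvable three-way branch, is used.
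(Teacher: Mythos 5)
First, a point of comparison: the paper does not prove this statement at all --- it is quoted from Seager \cite{sS14} and used as a black box --- so there is no in-paper argument to measure your proposal against, and I am judging it on its own merits. Your decomposition into (i)--(iii) is the right one, and your lower bound (ii) is essentially a complete and correct proof. The gate reduction $d(u,s)=d(u,g)+d(g,s)$ for $s$ in the induced copy is valid in a tree; confining attention to candidates inside the copy is legitimate since extra candidates elsewhere only help the robber; and your invariant family checks out: for instance $\{b,c\}$, $\{r,b_1,b_2\}$, $\{r,b_2\}$, $\{a,b,c\}$ and the set of six leaves each admit a response to every probe that lands back in the family, while a lone sibling pair and four leaves spread over two branches are indeed the configurations a single cop finishes off in one or two probes. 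Part (i) is the standard two-cop tree strategy (one cop anchored at the apex, the other scanning its neighbours, as in the base case of the paper's Theorem~\ref{lem:finite_ends}) and is fine, though ``each round strictly shrinks the subtree'' should read ``each phase'': a phase lasts up to $\deg(v)-1$ rounds.

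The genuine gap is (iii), which is the substance of Seager's theorem. You assert that $\hat{T}$-freeness means ``a single further probe always resolves the branch'' and that some potential, ``for instance combining the number of candidates with the diameter of the candidate set,'' decreases; neither is demonstrated, and neither is obviously true. An $\hat{T}$-free tree may still have every internal vertex carrying two neighbours of degree at least $3$ (e.g.\ a caterpillar with two pendant leaves at every spine vertex, or a spine with long pendant paths), so the robber can shuttle between the two heavy branches at a vertex while the cop is busy resolving one of them, and the candidate set re-expands after every robber move. Proving that the cop nevertheless makes monotone progress --- in particular, choosing what to probe when the candidate set straddles a spine vertex and both of its heavy neighbours --- is precisely the lengthy case analysis occupying most of \cite{sS14}. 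As written, your (iii) is a plan rather than a proof; to make it self-contained you would need to exhibit the cop's actual probe schedule and the potential function, and verify the decrease against an adversarial robber move.
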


The Localization game on locally finite trees, however, has received far less attention. While proving a result for finite graphs, Haselgrave, Johnson, and Koch gave the first theorem extending the Localization game to an infinite tree.
\begin{theorem}[\cite{HJK18}] \label{thm:delta_lower_bound}
The infinite $\Delta$-regular tree $T_\Delta$ satisfies $\zeta(T_\Delta) \ge \lfloor \frac{\Delta^2}{4} \rfloor$.
\end{theorem}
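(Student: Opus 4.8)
The plan is to prove the lower bound by describing a robber strategy that evades $k=\lfloor \Delta^2/4\rfloor-1$ cops forever. Since the cops capture the robber exactly when the set of candidates consistent with all probes so far shrinks to a single vertex, it suffices for the robber to guarantee that, after every round, there remain at least two candidates sharing a common distance vector $D$. Writing $N[S]$ for the set of vertices at distance at most one from a set $S$, the game then reduces to the following combinatorial question: starting from an ambiguous cloud $S$ of at least two candidates that share a distance vector, and given that in the next round the cops may reposition arbitrarily (recall they are not restricted to moving to neighboring vertices) and probe, can the robber always select a successor cloud $S'\subseteq N[S]$ with $|S'|\ge 2$ whose members again share a common distance vector, while keeping its true token inside $S'$? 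If so, the robber never loses, and $\zeta(T_\Delta)\ge\lfloor \Delta^2/4\rfloor$.

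First I would fix the invariant. I would center the construction at a vertex $v$ far from every cop and work with the $\Delta$ directions incident to $v$, splitting them into two groups of sizes $\lfloor \Delta/2\rfloor$ and $\lceil \Delta/2\rceil$. The robber maintains a cloud built from a \emph{two-coordinate} family of candidates: a ``first-level'' choice of direction at $v$ and a ``second-level'' choice of sub-branch one step further along. The central computation, which I would isolate as a lemma, is that in $T_\Delta$ a single probe constrains this family only weakly. A probe from a vertex lying in a given direction assigns a strictly smaller distance to the candidates in that direction and equal distances to all candidates lying in every other direction; hence one probe can only refine the family along the coordinates of the direction it occupies. A careful count of the resulting distance-classes should then show that $k$ probes cannot shatter the $\lfloor \Delta/2\rfloor\times\lceil \Delta/2\rceil$ grid of coordinate pairs into singleton classes unless $k\ge \lfloor \Delta/2\rfloor\cdot\lceil \Delta/2\rceil=\lfloor \Delta^2/4\rfloor$. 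This is precisely the step where the \emph{product}, and therefore the $\Delta^2/4$, is forced, in contrast to the merely linear bound one obtains from a single level of branching at $v$.

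I would then promote this single-round statement to a perpetual strategy. After the cops probe, the robber advances its true token within a surviving confused pair and, using local finiteness together with $\Delta$-regularity and the infiniteness of $T_\Delta$, re-centers the construction at a fresh vertex whose incident directions are still free of cops, thereby rebuilding a full product family of candidates. Because every ball in $T_\Delta$ is finite while the tree is infinite, such a cop-free vertex always exists regardless of where the finitely many cops sit. Checking that the new cloud lies in $N[S]$ of the old one, that it is genuinely confused, and that it still contains the robber's real vertex, is the routine bookkeeping of the re-centering step.

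I expect the main obstacle to be the adaptive, multi-round accounting rather than any single computation. One must rule out that a clever sequence of probes slowly erodes the ambiguity even though no single probe resolves the cloud, and one must ensure the robber's true position stays consistent with the evolving family, since the successor cloud must lie in $N[S]$ and must actually contain the robber. I would control this with a monovariant that tracks the number of coordinate pairs still carrying two or more confused candidates, showing that this ``confusion budget'' cannot be driven to zero by fewer than $\lfloor \Delta^2/4\rfloor$ probes in a single round, and that the re-centering resets the budget each round. The delicate points I would verify last are that the two-coordinate family survives the $N[\cdot]$-closure, and that the floor/ceiling boundary cases of the balanced split yield exactly $\lfloor \Delta^2/4\rfloor$ rather than an off-by-one value.
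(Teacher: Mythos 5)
First, note that the paper does not prove this statement: it is quoted directly from Haslegrave, Johnson, and Koch \cite{HJK18}, so there is no in-paper argument to compare against and your proposal has to stand on its own. Its overall shape---a robber strategy built around a confused family of candidates indexed by a product of two direction sets of sizes $\lfloor \Delta/2\rfloor$ and $\lceil \Delta/2\rceil$---is the right kind of idea for a bound of the form $\lfloor \Delta^2/4\rfloor$, but as written there are gaps that go beyond bookkeeping.

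The most serious gap is the invariant. You reduce the game to ``keep at least two candidates sharing a distance vector'' and later have the robber advance ``within a surviving confused pair.'' Against $k=\lfloor \Delta^2/4\rfloor-1$ cops this is far too weak: if the surviving class $S$ ever has bounded size, say $|S|=2$, then after the robber's move the candidate set lies in $N[S]$, a set of at most $2\Delta+2$ vertices, and for large $\Delta$ the $\Theta(\Delta^2)$ simultaneous probes resolve such a set outright (probing all but one of its elements already separates every pair of candidates). So the invariant must assert that the surviving class retains essentially the full product structure, and proving that this structure survives both the common refinement induced by $k$ simultaneous probes and the $N[\cdot]$-closure is the substance of the theorem, not the ``routine bookkeeping of the re-centering step.'' The central counting lemma is also unproved and imprecise: the candidate family is never pinned down as a concrete set of vertices (a first-level direction together with a second-level sub-branch gives $\Delta(\Delta-1)$ candidates, and you never say how the two-block split of the directions at $v$ converts this into a $\lfloor\Delta/2\rfloor\times\lceil\Delta/2\rceil$ grid, which is exactly where the constant $1/4$ must be earned); the claim that a probe assigns equal distances to all candidates in other directions holds only for candidates equidistant from $v$ and fails for the multi-layered set $N[S]$; and a probe inside direction $i$ in fact splits the grandchildren of $v$ into three classes (a singleton, the rest of direction $i$, and everything else), so the assertion that $k$ probes cannot shatter the grid requires an actual analysis of the common refinement of $k$ such partitions. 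Finally, ``shattering into singletons'' is the wrong target: the cops only need the robber's own class to become a singleton, while the robber needs a large surviving class from which the strategy can be restarted next round. Until these points are supplied, the proposal is a plan rather than a proof.
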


As a consequence of Theorems~\ref{thm:seager} and \ref{thm:delta_lower_bound}, we note that locally finite trees offer a richer spectrum of localization numbers than finite trees. We show that for any choice of $n$, including $\aleph_0$, there is a locally finite tree with localization number $n$.

\begin{theorem}\label{thm:achievable}
If $n$ is a positive integer or $n=\aleph_0$, then there is a locally finite tree $T$ with $\zeta(T) = n$.
\end{theorem}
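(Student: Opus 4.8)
The plan is to construct, for each target value $n$, a locally finite tree $T_n$ whose localization number is exactly $n$, and then handle the case $n = \aleph_0$ separately. The construction should be driven by Theorem~\ref{thm:delta_lower_bound}, which already gives us a lower bound mechanism: in a $\Delta$-regular tree, the robber can exploit the abundance of branches to evade $\lfloor \Delta^2/4\rfloor$ cops. My first step is therefore to tune a regular or near-regular tree to hit a prescribed value. Since $\lfloor \Delta^2/4\rfloor$ grows with $\Delta$ but skips many integers, a single regular tree will not give every $n$; instead I would build a tree that locally looks like a high-degree regular tree in a controlled region to force the robber's evasive power to be exactly $n$, while capping it from above elsewhere.

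The heart of the argument splits into a lower bound and a matching upper bound. For the \emph{lower bound} $\zeta(T_n) \ge n$, I would describe an explicit robber strategy: at each round the robber maintains a set of ``live'' candidates consistent with the cop probes, and I must show that with only $n-1$ cops this set never shrinks to a singleton. The key structural feature I would rely on is that the tree has uncountably many ends (as the abstract flags), so that at each decision point the robber can retreat into one of sufficiently many disjoint subtrees that the $n-1$ probes cannot distinguish; a counting or pigeonhole argument on the number of subtrees versus the number of distance vectors the cops can generate should do the work. For the \emph{upper bound} $\zeta(T_n) \le n$, I would exhibit a winning strategy for $n$ cops, typically by having some cops pin down the robber's location along one coordinate (e.g.\ distance from a fixed root) while the remaining cops resolve which branch the robber occupies, iterating so that the candidate set strictly decreases until capture in finitely many rounds.

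The main obstacle I expect is the lower-bound half, specifically proving that the robber's candidate set genuinely stays of size at least two \emph{indefinitely} against an omniscient-cop-aware but adversarial cop strategy. Unlike finite trees, where a potential-function or cornering argument terminates, here I must ensure the robber always has an escape branch that no configuration of $n-1$ probes can separate from a twin branch; this requires the branching structure to be rich enough at every scale, which is exactly why the uncountably-many-ends property enters. I would make this precise by designing the tree so that for every assignment of $n-1$ cop positions there exist two distinct vertices (or two distinct descending directions) with identical distance vectors, and then arguing the robber can perpetually move to maintain such an ambiguous pair.

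For the $n = \aleph_0$ case, I would take a single tree that simultaneously contains, as disjoint subtrees hanging off a common ray, copies of each $T_n$ for all finite $n$. Since capturing the robber requires finitely many cops but the robber may begin in the $T_m$-subtree for $m$ exceeding any finite cop budget, no finite number of cops suffices, giving $\zeta(T) = \aleph_0$; I would verify local finiteness of this amalgam and confirm that $\aleph_0$ cops do suffice (trivially, since a connected locally finite graph is countable and $\aleph_0$ probes can enumerate and test every vertex). The routine verifications of local finiteness and of the finiteness of the capture time I would relegate to short remarks.
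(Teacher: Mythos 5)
Your overall architecture (explicit robber strategy for the lower bound, branch-scanning cop strategy for the upper bound, and an amalgam of the finite examples for $n=\aleph_0$) matches the paper's, and your $\aleph_0$ case is essentially sound: the paper uses a single tree $T_\omega$ whose branching grows with depth, so that it contains $\Delta$-regular subtrees for every $\Delta$ and Theorem~\ref{thm:delta_lower_bound} defeats any finite number of cops, while countability of the vertex set gives the upper bound. Your ``copies of $T_n$ hanging off a ray'' achieves the same thing, contingent on the finite case.

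The genuine gap is in the finite case $2 \le n < \aleph_0$, where you have identified the tension (regular trees only realize values of the form $\lfloor \Delta^2/4\rfloor$, so you must somehow interpolate) but not the mechanism that resolves it. Saying the tree should ``locally look like a high-degree regular tree in a controlled region'' while ``capping it from above elsewhere'' does not work as stated: if the rich branching is confined to a bounded region, the robber cannot evade forever (they need fresh branch points arbitrarily far out), and if it persists to infinity, the localization number is driven up by Theorem~\ref{thm:delta_lower_bound}. The missing idea is \emph{edge subdivision}, which decouples the two quantities you need to control. The paper takes the infinite tree in which every branch vertex has $n(n-1)+1$ children and subdivides every edge $n-1$ times. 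The point is a precise numerical balance: consecutive branch vertices are now at distance $n$, so while the robber travels from one branch vertex to the next, $n$ cops get $n-1$ rounds in which to probe $n(n-1)$ of the $n(n-1)+1$ children and thereby isolate the robber's subtree while gaining ground (they advance $n$ while the robber advances $n-1$); conversely, $n-1$ cops can make only $n(n-1)$ probes in the $n$ rounds the robber needs to reach the next branch vertex, so at least one child subtree is never probed and the robber always has an indistinguishable twin to hide behind. Without this (or some equivalent device that slows the robber relative to the cops' scanning rate), your pigeonhole argument for the lower bound has no matching upper bound: any tree branchy enough to defeat $n-1$ cops by your argument would, absent subdivision, also defeat $n$ of them.
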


\begin{proof}
For $n =1$, a one-way infinite path (also called a ray) is an example. By probing the unique vertex of degree one, the cop captures the robber in the first round.

We use Theorem~\ref{thm:delta_lower_bound} to construct an example that requires countably many cops. Let $T_\omega$ be the rooted tree where each vertex at distance $i$ from the root has $i+1$ children. We note that $T_\omega$ is locally finite and connected, hence it has countably many vertices, and thus $\zeta(T_\omega)\leq \aleph_0$. On the other hand, for any number of cops $m < \aleph_0$, the robber can choose to play entirely within a subtree of $T_\omega$ isomorphic to $T_{2\lceil \sqrt{m+1} \rceil}$. Even if the cops knew the robber was implementing this strategy, they would require at least
\[ \left \lfloor \frac{(2\lceil \sqrt{m+1} \rceil)^2}{4} \right \rfloor \ge m+1 > m \]
cops to capture the robber.

Finally, we describe a locally finite tree requiring exactly $n$ cops for each $2 \le n < \aleph_0$. To do so, let $T(n)$ be the infinite rooted tree in which every vertex has $n(n-1)+1$ children and subdivide each edge $n-1$ times. We refer to the resulting graph as $S(n)$. The original vertices of $T(n)$ are referred to as \emph{branch nodes}, and the vertices added while subdividing the edges are referred to as \emph{path nodes}. We say that a vertex $v$ of $S(n)$ \emph{descends} from a vertex $u$ if $u$ lies on the unique path from the root to $v$.

We first demonstrate that $n$ cops can capture the robber on $S(n)$. Roughly, their strategy is as follows: at each branch node, the cops determine which subtree contains the robber and move to the next branch node on that subtree. We will show that the cops can do this faster than the robber can flee further down the tree, implying that they eventually probe close enough to the robber to capture them.

Over $n-1$ rounds, the $n$ cops probe $n(n-1)$ of the $n(n-1)+1$ branch vertices closest to the root. If the robber moves to the root during any of these rounds, then each probe will return distance exactly $n$, which uniquely identifies the root, and the robber is captured. Therefore, in order to avoid capture, the robber must not leave their current subtree. We claim that over these $n-1$ rounds the cops either capture the robber or determine on which subtree they reside. The cops can then iterate this strategy, treating the first branch vertex on the subtree containing the robber as the new root. During these $n-1$ rounds, the robber moves at most $n-1$ vertices further from the root, but the cops move $n$ vertices closer to the robber, so each iteration moves the cops at least one vertex closer to the robber. Since the robber started at finite distance from the root, the cops eventually overtake and capture the robber.

We now demonstrate how the cops determine which tree contains the robber. On any round in which each probe returns the same distance, the cops know the robber is not located on any of the probed branches (unless, as previously mentioned, the robber is captured at the root). It is either the case that in each of the $n-1$ rounds, each probe returns the same distance to the robber, or else there is some round in which one probe is closer to the robber than the rest. In the first case, the cops know the robber is located on the unique unprobed subtree. If the distance returned by the probes in the final round is less than $2n$, then the robber is captured on the path between the root and the first branch vertex on the unprobed branch. Otherwise, the cops have determined that the robber is in the subtree rooted at that first branch vertex. In the second case, if one cop receives a distance of less than $n$ in a round when the other cops receive identical distances smaller than $2n$, then the robber is captured on the path between the closest cop and the root. Otherwise, the robber is located in the subtree rooted at the vertex closest to the robber. In either case, the cops have identified the next branch vertex in their strategy.

Now we prove that if the cop player uses only $n-1$ cops, the robber can evade capture. 
In $n$ rounds, the $n-1$ cops may probe a vertex in at most all but one of the connected components of $S(n)$ with the root removed. 
Let $u$ be a branch node of distance $n$ from the root whose connected component is not probed by a cop in the first $n$ rounds. 
Suppose that during the first round, the robber chose to be on the unique vertex adjacent to the root with a distance $n-1$ from $u$. Over the first $n-1$ rounds, there are multiple unprobed components, so the cops cannot distinguish the robber's position from equivalent positions on those other components. The robber then moves away from the root towards the next branch vertex, say $s$, reaching it on the $(n-1)$st round. On round $n$ the robber leaves $s$, choosing a component that, when removing $s$, will not be probed during the next $n$ rounds. Even if the cops are then able to determine which component the robber originally chose, they can now not distinguish between the children of $s$, so the robber continues to avoid capture. The game is now identical to the starting position: the cops have identified a subtree containing the robber, the robber is on a vertex adjacent to the root of that subtree, and the robber has chosen a component that will not be probed in the next $n$ rounds. Thus, by repeating this strategy, the robber avoids capture indefinitely.
\end{proof} 

Note that in the trees constructed in the proof of Theorem \ref{thm:achievable}, the robber evades capture by moving along an infinite path, never visiting the same vertex twice. The reason the cops are not able to pin down the location of the robber is that there are many rounds in which the robber has several branches along which they could continue moving away. It will turn out that having these branches is necessary for a tree to have large localization number.

We use the notion of ends to formalize this statement. A \emph{ray} is an infinite one-way path, and a \emph{double ray} is an infinite two-way path. Two rays are called \emph{equivalent} if there is a third ray sharing infinitely many vertices with each of them. An \emph{end} is an equivalence class of rays with respect to this equivalence relation. We note that if two rays lie in different ends, then they can be separated by removing finitely many vertices, matching our concept of separate infinite branches. 

The theory of ends in general locally finite graphs is expansive (see \cite{rD17}); in the context of locally finite trees, however, things are more straightforward. In particular, two rays of a tree are equivalent if and only if they eventually coincide, or more precisely, if they share a common subray. Moreover, for a given vertex $v$ and end $\epsilon$ of a tree, there is a unique ray starting at $v$, which belongs to $\epsilon$. It is also known that a tree has uncountably-many ends if and only if it contains the infinite binary tree as a minor, see~\cite{rD17}.

Note that each example with localization number more than $2$ given in Theorem \ref{thm:achievable} contains the infinite binary tree and thus has uncountably-many ends. We will show that this is no coincidence: all locally finite trees with at most countably many ends have localization number at most $2$. We start by considering trees with finitely many ends.

\begin{theorem}\label{lem:finite_ends}
If $T$ is a locally finite tree with finitely many ends, then $\zeta(T) \le 2$.    
\end{theorem}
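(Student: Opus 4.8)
The plan is to reduce $T$ to a finite ``core'' carrying finitely many one-ended branches, and then to exploit a special feature of trees: a single cop, by repeatedly probing a fixed root, learns the robber's exact distance to that root in every round. First I would pin down the structure. Writing $m$ for the number of ends, fix a base vertex and, for each pair of distinct ends, the unique vertex at which their rays diverge. There are only finitely many such divergence vertices, and the finite subtree $C_0$ spanning them and the base vertex meets every end-separating cut, so each component of $T - C_0$ contains at most one end. Since $T$ is locally finite and $C_0$ is finite, only finitely many components hang off $C_0$; absorbing the finite ones into $C_0$ yields a finite subtree $C$ (the \emph{core}) such that $T - C$ consists of exactly $m$ one-ended subtrees $A_1, \dots, A_m$ (the \emph{arms}), each joined to $C$ by a single edge. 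Each arm is a ray with finite trees hanging off it.

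The engine of the strategy is the observation that in a tree every edge changes the distance to a fixed root $r$ by exactly $1$. Hence if Cop~A probes $r$ in every round, the cops learn the robber's distance $d_t$ to $r$ exactly, and in particular whether the robber moved toward $r$, away from $r$, or stayed put. Two consequences follow. First, after the very first probe the robber lies on a sphere about $r$, which is finite because $T$ is locally finite, so the candidate set is finite from round one onward. Second, to pass from one arm to another the robber must travel through the divergence vertex of the two arms, which lies in the core; that is, it must drive its distance to $r$ down to a bounded value. Thus a robber that is deep in an arm is ``committed'' to it unless it spends many rounds returning toward the core, during which the tracking cop watches $d_t$ steadily decrease.

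With Cop~A fixed at the root as a tracker, Cop~B plays the searcher. When $d_t$ is small the robber is confined to a finite neighborhood of the core, on which the two cops run Seager's finite-tree strategy (Theorem~\ref{thm:seager}); if the robber never leaves this region it is captured. If instead it flees down an arm $A_i$, then---because the candidate set is finite and, while the robber stays committed, can only grow outward along $A_i$---Cop~B chases it down $A_i$ exactly as on a finite tree, the distance data from Cop~A doing the work of a second searching cop. I would package this as a progress argument: whenever the robber is forced back toward the core the candidate set is narrowed, and whenever it flees outward the searcher advances at least as fast, so the robber cannot postpone capture forever.

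The main obstacle is the bookkeeping across several arms together with the robber's freedom to feint: it may start down one arm, retreat, and commit to another, and I must rule out an infinite oscillation that keeps the candidate set from collapsing to a single vertex. Making the progress argument rigorous---choosing the right monotone quantity (for instance, the distance from $r$ to the deepest vertex the searcher has already ``cleared'') and verifying that it forces either capture or unbounded outward flight into one fixed arm---is where the real work lies. The finitely-many-ends hypothesis is precisely what keeps the number of arms, and hence this bookkeeping, finite.
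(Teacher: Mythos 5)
Your structural setup (finite core plus finitely many one-ended arms, each a ray with finite trees attached) is correct, and parking one cop at a fixed root so that the robber's exact distance $d_t$ is known every round is also the engine of the paper's argument. But there is a genuine gap exactly where you admit ``the real work lies'': the progress argument is never supplied, and the two places where you lean on known results do not apply as stated. First, you cannot ``run Seager's finite-tree strategy on a finite neighborhood of the core,'' because that strategy presupposes the robber is confined to the finite tree; here the robber may leave the neighborhood and return later, and every vertex the searcher has ``cleared'' re-enters the candidate set as soon as the robber could have reached it again, so your proposed monotone quantity (depth of the deepest cleared vertex) is not in fact monotone. Second, ``Cop~B chases it down $A_i$ exactly as on a finite tree'' asserts that one searcher, supplemented only by distance-to-root data, suffices on a one-ended arm --- but this is essentially the statement being proved, not a citation: the finite trees hanging off the arm may contain $\hat{T}$, so by Theorem~\ref{thm:seager} a single searcher cannot handle them unaided, and the tracker's data does not even distinguish a leaf at $u_j$ from the next spine vertex $u_{j+1}$ (both lie at the same distance from the root --- compare Theorem~\ref{thm:comb}). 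Nothing you write excludes the robber feinting among arms indefinitely while the candidate set never drops below size two.

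The paper closes this gap by induction on the number of ends with a strengthened hypothesis: two cops can win \emph{and} guarantee immediate capture if the robber ever visits a designated vertex $v$. One cop holds at $v$ while the other probes $v_{d+1}, v_d, v_{d-1}, \dots$ down a ray $R$ starting at $v$; the quantities $d_1+d_2$ and $d_1-d_2$ either catch the robber on $R$ or localize it to a single component $S_i$ of the forest obtained by deleting the edges of $R$, and $S_i$ has strictly fewer ends. The strengthened hypothesis, reapplied with the attachment vertex of $S_i$ as the new designated vertex, is precisely the commitment mechanism you are missing: once localized, the robber cannot change arms without being caught. To salvage your outline you would need an analogous mechanism; identifying and locking in the arm is the whole difficulty, not bookkeeping.
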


\begin{proof}
We prove a stronger claim: if $T$ is a locally finite tree with finitely many ends and $v\in V(T)$, then two cops can capture the robber such that if the robber ever moves to $v$, then they are captured immediately after the next cop move. We proceed by induction on the number of ends of $T$.

If $T$ contains no ends, then it contains no rays, so as $T$ is locally finite, it must be finite. 
The following strategy, described in~\cite{aB22}, leads to the capture of the robber without letting them occupy $v$: while one cop repeatedly probes $v$ (thereby catching the robber if they attempt to occupy $v$), the second cop sequentially probes the neighbors of $v$.
The cop placed on the neighbors of $v$ will probe a distance that is smaller than the cop probing $v$ exactly once, which is when it probes the subtree containing the robber. 
The cops then switch roles, with the cop who previously probed $v$ sequentially probing the neighbors of the vertex adjacent to $v$ closest to the robber. 
By repeating this process, the cops push the robber towards a leaf, where they are captured before they can return to $v$.

Next, assume that $T$ has at least one end, and let $R = (v = v_0,v_1,v_2,\dots)$ be a ray starting at $v$. Let $S$ be the graph obtained from $T$ by removing all edges of $R$, and let $S_i$ be the connected component of $S$ containing $v_i$. Note that every $S_i$ has fewer ends than $T$ because each ray in $S_i$ belongs to some end of $T$, but none of them belongs to the same end as $R$.

In the first round, both cops probe $v$. Let $d$ be the distance returned by this probe. If $d= 0$, the robber is captured. Otherwise, the first cop keeps probing $v$ receiving distance $d_1$, and the second cop probes $v_{i}$ for $i = d+1,d,d-1,d-2,\dots$ receiving distance $d_2$. In each of these rounds, $d_1+d_2 = i$ if and only if the robber's location is between $v$ and $v_{i}$ on $R$; in this case, the robber is located at $v_{d_1}$ and therefore is captured. Further, $d_1-d_2 = i$ if and only if the robber's location is in $S_i$, and  $d_2-d_1 = i$ if and only if the robber's location is in $S_0$. In this case, since $S_i$ has fewer ends than $T$, the cops can capture the robber within finitely many steps by the induction hypothesis. Since the robber started out in some $S_i$ for $i \leq d$ and would be captured if they return to $v$, one of these two options must eventually happen. Therefore, the robber is captured after finitely many steps.
\end{proof}

As mentioned, we may relax the condition of finitely many ends in the statement of Theorem~\ref{lem:finite_ends} even further. Perhaps surprisingly, two cops also have a winning strategy in case there are countably many ends.

\begin{theorem}\label{thm:countable_ends}
If $T$ is a locally finite tree with countably many ends, then $\zeta(T) \le 2$.    
\end{theorem}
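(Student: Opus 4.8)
The plan is to run the same two-cop strategy as in Theorem~\ref{lem:finite_ends}, but to replace the induction on the (finite) \emph{number} of ends by a transfinite induction on a well-founded measure built from the Cantor--Bendixson analysis of the end space. As in that proof, I would establish the stronger statement that for every $v \in V(T)$ two cops can win while maintaining the \emph{$v$-trap} property: if the robber ever moves to $v$, it is captured immediately after the next cop move.

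First I would record the relevant topology of the end space $\Omega(T)$. For a locally finite connected graph the space of ends is compact, totally disconnected, and metrizable (see \cite{rD17}); since $T$ has only countably many ends, $\Omega(T)$ is a countable compact space, hence scattered. Its Cantor--Bendixson rank is therefore a successor ordinal $\beta + 1$, and the top derivative $\Omega(T)^{(\beta)}$ is a nonempty finite set; write $m = |\Omega(T)^{(\beta)}|$. I would then take $\mu(T) = (\beta, m)$, ordered lexicographically. As a subset of $\mathrm{Ord} \times \omega$ this order is well-founded, so transfinite induction on $\mu(T)$ is legitimate. The base case is when $\Omega(T)$ is finite --- equivalently $T$ has finitely many ends, which includes the degenerate case of an empty end space as well as $\beta = 0$ --- and here Theorem~\ref{lem:finite_ends} applies directly, already in its $v$-trap form.

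For the inductive step I would choose the ray $R = (v = v_0, v_1, \dots)$ from $v$ belonging to some end $\epsilon_R \in \Omega(T)^{(\beta)}$ of maximal rank; such an end exists because $\Omega(T)^{(\beta)}$ is nonempty, and in a tree the ray from $v$ toward a fixed end is unique. Deleting the edges of $R$ produces the hanging subtrees $S_i$ attached at $v_i$, and the game dynamics are then literally those of Theorem~\ref{lem:finite_ends}: one cop keeps probing $v$ (enforcing the $v$-trap), while the second cop walks along $R$ and, after finitely many rounds, either captures the robber on $R$ or certifies that it lies in a unique $S_i$, after which the cops recurse inside $S_i$ using the induction hypothesis with trap vertex $v_i$. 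The $v_i$-trap property is exactly what prevents the robber from escaping $S_i$ back up the ray toward $v$, so the confinement argument carries over unchanged.

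The crux --- and the step I expect to be the main obstacle --- is verifying that $\mu(S_i) <_{\mathrm{lex}} \mu(T)$ for every hanging subtree, so that the recursion is well-founded. Here I would use that the set $\Omega_i \subseteq \Omega(T)$ of ends running into $S_i$ is a clopen subset, whence the relative derivatives satisfy $\Omega_i^{(\gamma)} = \Omega_i \cap \Omega(T)^{(\gamma)}$ for every ordinal $\gamma$ (this identity for clopen subsets should be checked by transfinite induction on $\gamma$, the limit stages using that $\Omega_i$ is closed). Since $\epsilon_R \in \Omega(T)^{(\beta)}$ but $\epsilon_R \notin \Omega_i$, we obtain $\Omega_i \cap \Omega(T)^{(\beta)} \subsetneq \Omega(T)^{(\beta)}$: either this set is empty, so $S_i$ has strictly smaller rank, or it is a proper nonempty subset, so $S_i$ has the same rank $\beta$ but strictly fewer maximal-rank ends. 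In both cases $\mu(S_i) <_{\mathrm{lex}} \mu(T)$. The remaining care is purely bookkeeping: confirming that $\Omega(T)$ really is compact (so that the top derivative is finite), that $\Omega_i$ is clopen in $\Omega(T)$, and that each $S_i$ inherits local finiteness and countability of its end space so that the induction hypothesis genuinely applies.
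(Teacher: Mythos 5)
Your proof is correct, and it shares the paper's overall skeleton---a transfinite induction on an ordinal rank measuring the complexity of the end structure, with each inductive step carried out by the ray-splitting two-cop strategy of Theorem~\ref{lem:finite_ends} in its $v$-trap form---but you set up the rank quite differently. The paper never topologizes the end space: it performs a \emph{recursive pruning} of the tree itself (an ordinal labelling of the vertices, following \cite{rD17}), assigns each end the eventual label along its ray, proves combinatorially (via ``essential'' vertices) that the maximal end label is attained and only by finitely many ends, and then inducts on that single maximal label, disposing of all the finitely many top-label ends inside one inductive step. You instead take Cantor--Bendixson derivatives of the compact countable end space $\Omega(T)$, where finiteness of the top derivative comes for free from compactness, and you fold the count of maximal-rank ends into the lexicographic measure $(\beta,m)$ so that each inductive step is a single ray-split with an explicit descent verification $\mu(S_i)<_{\mathrm{lex}}\mu(T)$ via the clopen-restriction identity for derivatives. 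The two rank notions are essentially equivalent, but your bookkeeping makes the inductive step more self-contained than the paper's (which is only sketched as ``repeatedly restrict the robber's access to ends with label $\alpha$''), at the cost of importing the topology of the end space rather than citing the purely combinatorial pruning criterion. The points you flag do all check out; for the clopen-ness of $\Omega_i$ it is cleanest to observe that $S_i$ is the relevant component of $T-\{v_{i-1},v_{i+1}\}$, a deletion of a \emph{finite} vertex set, since clopen-ness is not immediate from deleting the infinitely many edges of $R$.
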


For the proof of Theorem~\ref{thm:countable_ends}, we use transfinite induction on a certain ordinal labeling of ends. The base case for the induction uses Theorem~\ref{lem:finite_ends}. Before stating the proof we need a few definitions.

Given a locally finite rooted tree $T$ we define a partial order, called the \emph{tree order} on $V(T)$ by $u \sqsubseteq v$ if $u$ is on the unique path from $v$ to the root. Let Ord by the class of ordinals. A \emph{recursive pruning} is a labeling $\ell\colon V \to \mathrm{Ord}$ of the vertices of $T$ by ordinal numbers constructed by transfinite recursion as follows. We point out that the recursive definition of the labels ensures that if $v$ is labeled, then so is every $w$ such that $v \sqsubseteq w$. Moreover, $v \sqsubseteq w$ implies that $\ell(v) \geq \ell(w)$.

We start the construction by setting $\ell(v) = 0$ if the elements of the set $\{u \in V(T) \mid v \sqsubseteq u\}$ are pairwise comparable with respect to the tree order. Note that this implies that $\ell(w) = 0$ for every $w$ such that $v \sqsubseteq w$.

For the recursive step, let $\alpha$ be the smallest ordinal which does not appear as a vertex label. Let $T_\alpha$ be the tree obtained from $T$ by removing all vertices $w$ satisfying $\ell(w) < \alpha$. Note that $T_\alpha$ is connected for all $\alpha,$ and the tree order on $T_\alpha$ coincides with that on $T$.
Label a vertex $v \in V(T_\alpha)$ with label $\ell(v) = \alpha$ if any two vertices in the set $\{u \in V(T_\alpha) \mid v \sqsubseteq u\}$ are comparable with respect to the tree order.
In other words, after pruning all vertices labeled so far, assign label $\alpha$ to all vertices after the point where any path or ray starting at the root stops branching. Note that every $w\in V(T_\alpha)$ with $v \sqsubseteq w$ receives label $\ell(w) = \alpha$.

We refer the reader to~\cite[Chapter 8]{rD17} for more background on recursive prunings. The only fact we will need is that a tree $T$ has a recursive pruning (that is, every vertex receives a label in the above procedure) if and only if $T$ does not contain a subdivision of the infinite binary tree, see~\cite[Proposition 8.5.1]{rD17}. Since the infinite binary tree has uncountably many ends, it follows that every rooted tree with countably many ends has a recursive pruning. 

\begin{proof}[Proof of Theorem~\ref{thm:countable_ends}]
Let $T$ be a tree with at most countably many ends. Let $\ell$ be the labeling coming from a recursive pruning with root vertex $r$. As usual for rooted trees, we define the parent of a vertex $v$ to be the second vertex of the unique path from $v$ to $r$, and we say that $v$ is a child of $w$ if $w$ is the parent of $v$. 

Recall that $v \sqsubseteq w$ implies that $\ell(v) \geq \ell(w)$. In particular, if $\ell(v) = \alpha$, then $\ell(w) \leq \alpha$ for all children of $v$.
 
Next, note that the labels along each ray $R$ starting at $r$ are weakly decreasing.  Since decreasing sets of ordinals are finite, there is a sub-ray of $R$ all of whose vertices have the same label. We call this label the \emph{end label} of the end $\epsilon$ containing $R$ and denote it by $\ell(\epsilon)$. Note that $\ell(\epsilon) \geq \alpha$ if and only if $R$ is a ray of $T_\alpha$.

We claim that the supremum of the end labels is, in fact, a maximum and that there are only finitely many ends attaining this maximum. To prove this claim, let us call a vertex of $T$ \emph{essential} if it lies on a ray starting at the root. For each $i$, let $\alpha_i$ be the maximal label of an essential vertex at distance $i$ from the root. Note that this exists because the set of essential vertices at each given distance is finite. Since all vertices on the path from an essential vertex to the root are also essential, we know that the sequence $\alpha_{i}$ is decreasing. There is no infinite, strictly decreasing sequence of ordinal numbers, so there must be some $j \geq 0$ such that $\alpha_i = \alpha _j$ for every $i \geq j$. Define $\alpha= \alpha_j$.

Note that $T_\alpha$ is infinite because there is an essential vertex with label $\geq \alpha$ at each possible distance from $r$. In particular, there is an end with end label $\alpha$. On the other hand, every ray starting at the root must pass through some essential vertex at distance $j$ from the root. Since the labels along such a ray are decreasing, we conclude that $\ell(\epsilon) \leq \alpha$ for every end $\epsilon$, hence $\alpha$ is the maximal end label.

Let $v$ be a vertex at distance $j$ from the root, and assume that there is a ray in $T_\alpha$ starting from the root which contains $v$. We then have that $v$ is essential, and therefore $\ell (v) \leq \alpha$. Note that  $\ell (v) \geq \alpha$ since $v \in T_\alpha$ and thus, $\ell(v) = \alpha$. This implies that any two elements in the set  $\{u \in V(T_\alpha) \mid v \sqsubseteq u\}$ are comparable, and hence, this set forms a ray in $T_\alpha$. Therefore, there is exactly one ray in $T_\alpha$ starting from the root and passing through $v$. It follows that the number of ends of $T_\alpha$ is at most the number of vertices at distance $j$ from the root, and therefore this number is finite.

We prove that two cops have a winning strategy by transfinite induction on the largest end label in the recursive pruning of $T$. For the base case, if $T$ contains no ends, or if the largest end label is $0$, then given that there are finitely many ends with end label $0$, the result follows from Theorem~\ref{lem:finite_ends}. 

Assume now that the theorem holds if the largest end label is strictly less than $\alpha$ and let $T$ be a tree with the largest end label $\alpha$. By implementing a strategy similar to that used to prove Theorem~\ref{lem:finite_ends}, two cops repeatedly restrict the robber's access to ends with label $\alpha$ until the robber is trapped on a subgraph with ends which have label less than $\alpha$. At this point, the cops have a winning strategy by the induction hypothesis, and the theorem follows.
\end{proof}

Given Theorem~\ref{thm:countable_ends}, it is natural to ask if there is a version of Theorem~\ref{thm:seager} for locally finite trees with countably many ends. However, unlike in the finite case, $T_3$ is not the only obstruction to a locally finite tree having localization number one. One such example is the \emph{doubly infinite comb} graph $T_1^\infty$ consisting of a double ray with a leaf attached to each vertex; see Figure~\ref{comb}. 

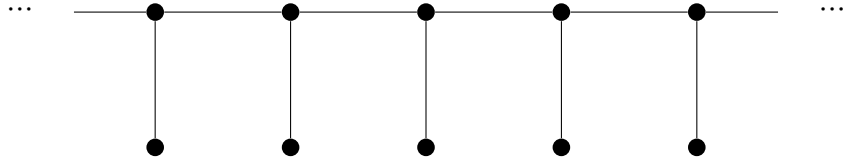
\begin{figure}[ht!]
\centering
\begin{tikzpicture}[scale=1.8]
\node at (-3,0) {$\cdots$};
\node[fill, circle, scale=0.6] (a) at (-2,0) {};
\node[fill, circle, scale=0.6] (a1) at (-2,-1) {};
\node[fill, circle, scale=0.6] (b) at (-1,0) {};
\node[fill, circle, scale=0.6] (b1) at (-1,-1) {};
\node[fill, circle, scale=0.6] (c) at (0,0) {};
\node[fill, circle, scale=0.6] (c1) at (0,-1) {};
\node[fill, circle, scale=0.6] (d) at (1,0) {};
\node[fill, circle, scale=0.6] (d1) at (1,-1) {};
\node[fill, circle, scale=0.6] (e) at (2,0) {};
\node[fill, circle, scale=0.6] (e1) at (2,-1) {};
\node at (3,0) {$\cdots$};

\draw (-2.6,0) -- (a) -- (b) -- (c) -- (d) -- (e) -- (2.6,0);
\draw (a1) -- (a);
\draw (b1) -- (b);
\draw (c1) -- (c);
\draw (d1) -- (d);
\draw (e1) -- (e);

\end{tikzpicture}
\caption{The graph $T_1^\infty$.}\label{comb}
\end{figure}
 
\begin{theorem}\label{thm:comb}
The tree $T_1^\infty$ is a locally finite, $\hat{T}$-free tree with $\zeta(T_1^\infty) = 2$.    
\end{theorem}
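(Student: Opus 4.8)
The plan is to verify the three assertions in turn. That $T_1^\infty$ is locally finite is immediate: every spine vertex has degree $3$ and every leaf has degree $1$. For $\hat T$-freeness I would argue by a degree count. Label the spine of the double ray $(\dots, x_{-1}, x_0, x_1, \dots)$ and let $y_i$ be the leaf attached to $x_i$. In $\hat T$ the root and its three neighbours all have degree $3$, so in any induced copy each of these four vertices must use all three of its neighbours in $T_1^\infty$; since only spine vertices have degree $3$, the root must be some $x_j$, and its three neighbours $x_{j-1},x_{j+1},y_j$ would have to realize the three degree-$3$ vertices of $\hat T$. But $y_j$ is a leaf, so it cannot have the two further neighbours required, a contradiction. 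Hence no induced $\hat T$ exists. Finally, every ray of $T_1^\infty$ eventually follows the spine in one of its two directions (a ray cannot terminate at a leaf), so $T_1^\infty$ has exactly two ends; Theorem~\ref{lem:finite_ends} then gives $\zeta(T_1^\infty)\le 2$.

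It remains to prove $\zeta(T_1^\infty)\ge 2$, i.e.\ that a single cop cannot win. I would present this as an evasion argument in which the robber maintains, after every probe, a set of at least two positions consistent with all responses so far. Concretely, I would track a \emph{witness pair} $C_t$ of two candidate vertices at distance exactly $2$ and show that such a pair can be maintained forever. The key structural observation is that the three shapes of distance-$2$ pairs in $T_1^\infty$ are exactly $\{x_{i-1},x_{i+1}\}$, $\{x_i,y_{i+1}\}$, and $\{x_i,y_{i-1}\}$; indeed every pair of vertices at distance $2$ is of one of these forms, since a leaf--leaf pair has distance at least $3$. Writing $N[\,\cdot\,]$ for the closed neighbourhood and $S(p,d)$ for the sphere of radius $d$ about $p$, the robber updates the witness by choosing, in response to the cop's probe $p_t$, a reported distance $d_t$ such that $N[C_{t-1}]\cap S(p_t,d_t)$ again contains two vertices at distance $2$; he then takes $C_t$ to be such a pair. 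Since every vertex of $C_t$ lies in $N[C_{t-1}]$ and is at distance $d_t$ from $p_t$, the true set of positions consistent with the history contains $C_t$ and so always has size at least $2$, whence the cop never isolates a unique candidate. An actual robber trajectory realizing these responses is extracted by choosing $r_t\in C_t$ backwards along the reachability relation, and an initial witness pair is set up from the first probe (for any $p_1$, the sphere $S(p_1,2)$ already contains a distance-$2$ pair).

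The heart of the proof---and the step I expect to be most laborious---is verifying that the witness pair can always be maintained, that is, that for every distance-$2$ pair $C_{t-1}$ and every probe $p_t$ (on a spine vertex or on a leaf) there is a response $d_t$ keeping a distance-$2$ pair inside $N[C_{t-1}]\cap S(p_t,d_t)$. By the translation and reflection symmetries of $T_1^\infty$ it suffices to treat the pairs $\{x_{i-1},x_{i+1}\}$ and $\{x_i,y_{i+1}\}$, and for each only finitely many positions of $p_t$ relative to the pair, with the remaining positions handled uniformly as ``$p_t$ far to the left'' and ``$p_t$ far to the right.'' In every case the mechanism is the same: a leaf $y_j$ lies at distance $d(x_k,x_j)+1$ from a spine probe $x_k$, so it duplicates the distance of a neighbouring spine vertex (and symmetrically for leaf probes), and this coincidence always leaves at least two of the five-to-seven vertices of $N[C_{t-1}]$ tied at a common distance and at mutual distance $2$. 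I would organize this as a short table of distances for the vertices of $N[C_{t-1}]$ in the cases $p_t$ far left, $p_t$ far right, and $p_t$ within the small window around the pair, reading off a surviving distance-$2$ pair in each row. The obstacle here is purely the bookkeeping; the conceptual content is the single observation that the pendant leaves replicate spine distances, which is exactly what prevents one probe from ever separating a distance-$2$ pair. Combining this lower bound with the upper bound above yields $\zeta(T_1^\infty)=2$.
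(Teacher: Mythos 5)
Your proof is correct and follows essentially the same route as the paper: local finiteness and $\hat T$-freeness by degree counting, the upper bound from Theorem~\ref{lem:finite_ends} (the tree has two ends), and the lower bound by an adversary argument in which the robber perpetually maintains at least two consistent candidates. Your invariant (a distance-$2$ witness pair, which is necessarily a spine vertex together with either the spine vertex two over or the leaf attached to an adjacent spine vertex) coincides with the set of pairs the paper's own induction preserves---its cases leave $\{w,v'\}$, $\{u,v'\}$, or $\{u,w\}$ surviving, each at mutual distance $2$---so the two case analyses verify the same facts, with the paper organizing the probes by which component of $T_1^\infty$ minus $\{v,v'\}$ they lie in rather than by distance tables.
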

\begin{proof} Note that for each $v \in V(T_1^\infty)$, at most two neighbors of $v$ have degree more than one, so $T_1^\infty$ does not contain $\hat{T}$. By Lemma~\ref{lem:finite_ends}, $\zeta(T_1^\infty) \le 2$, so it suffices to prove that one cop cannot capture the robber. We do so by describing a strategy the robber can use to avoid capture by a single cop.

To set up an inductive argument, assume that at the end of some cop turn, the set of contains a vertex on the double ray, say $v$, together with some other vertex. This implies the robber has not been captured, as there must be a unique candidate for the robber to be captured.  Let $u$ and $w$ be the vertices on the double ray on either side of $v$.
Define $u',v,' w'$ as the neighboring vertices of these three that are not on the double ray. 
Let $T_1$ and $T_2$ be the subtrees of $T_1^\infty$ when $v$ and $v'$ are deleted such that $T_1$ contains $u$ and $T_2$ contains $w$.   
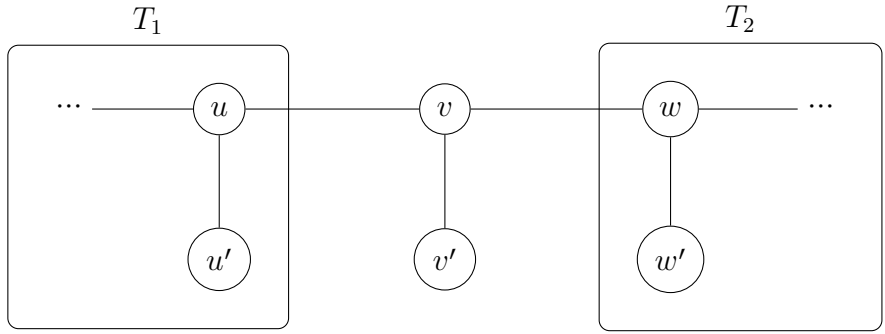
\begin{figure}
    \centering
    \begin{tikzpicture}[>=stealth, auto, node distance=2cm]
    \node[circle, draw] (u) {$u$};
    \node[circle, draw, below of=u] (up) {$u'$};
    \node[circle, draw, right of=u, node distance=3cm] (v) {$v$};
    \node[circle, draw, below of=v] (vp) {$v'$};
    \node[circle, draw, right of=v, node distance=3cm] (w) {$w$};
    \node[circle, draw, below of=w] (wp) {$w'$};
    \node[draw=none] (q_dots_l) [left of =u] {$\cdots$};
    \node[draw=none] (q_dots_r) [right of =w] {$\cdots$};
    
    \draw[-] (u) -- (v);
    \draw[-] (v) -- (w);
    \draw[-] (up) -- (u);
    \draw[-] (vp) -- (v);
    \draw[-] (wp) -- (w);
    \draw[-] (u) -- (q_dots_l); 
    \draw[-] (w) -- (q_dots_r); 
    
    \node[draw, rounded corners, inner sep=0.5cm, fit=(u) (up) (q_dots_l), label={$T_1$}] {};
    \node[draw, rounded corners, inner sep=0.5cm, fit=(w) (wp) (q_dots_r), label={$T_2$}] {};
\end{tikzpicture}
    \caption{The tree $T_1^\infty$ with an arbitrarily chosen vertex $v$ on the double ray.}
    \label{fig:T_infty}
\end{figure}
See Figure~\ref{fig:T_infty} for a visualization of the situation.

The robber takes their move, and the set of candidates now contains $\{u,v,w,v'\}$. 
If the cop plays in $T_1$ or on $v$, then the new set of candidates will contain $w$ and $v'$. 
If the cop plays in $T_2$, then the new set of candidates contains $u$ and $v'$. 
If the cop plays on $v'$, then the new set of candidates contains $u$ and $w$.
The set of candidates includes a vertex on the double ray along with some other vertex. 
Thus, the inductive assumption holds at the end of the cops' next move.
The base case of the induction holds, as the set of candidates before the first cop move is the set of all vertices in the graph. 
By induction, the set of candidates never contains fewer than two elements, and so the robber is never captured. 
\end{proof}

We may show that the tree $T_1^\infty$ is \emph{minimal} in the sense that deleting any edge results in a tree $T$ with $\zeta(T) = 1$. Note that the strategy for avoiding the cop described in Theorem~\ref{thm:comb} relies on the cop being unable to differentiate between a leaf vertex and the next vertex along the double ray. With any edge removed, the cop is able to force the robber towards the anomaly, capturing the robber in the finite part of the tree (if the missing edge disconnects the double ray) or when the missing leaf removes the ambiguity of the robber's position (if the missing edge disconnects a leaf). An interesting problem is determining the minimal locally finite trees with countably (or even finitely) many ends and localization number 2. We think that examples other than $\hat{T}$ and $T_1^\infty$ exist, but it is open whether there exists an infinite family of minimal locally finite trees with two ends and localization number 2.

\section{Subdivisions of graphs}
 Theorem~\ref{thm:delta_lower_bound} implies that the infinite $n(n-1)$-regular tree requires $\Omega(n^4)$ cops, but in Theorem~\ref{thm:achievable} we see subdividing reduced the number of required cops to $n$. This technique was studied in finite graphs, where it is known that every finite graph $G$ has a subdivision $G'$ such that $\zeta(G')=1$; see~\cite{CCDEW12}. We finish with an analogous result holding for locally finite graphs.

\begin{theorem}\label{tsub}
For every locally finite graph $G$, there is a subdivision $G'$ of $G$ such that $\zeta(G') = 1$.
\end{theorem}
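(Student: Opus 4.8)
The plan is to choose the subdivision so that $G'$ has very long edges whose lengths are ``generic,'' and then to exhibit an explicit single-cop strategy that tracks the robber through $G'$. Since $G$ is connected and locally finite, its vertex set is countable; I would fix an enumeration of the edges together with a spanning tree $T$ rooted at a vertex $r$, and subdivide each edge $e$ into a path whose length $\ell_e$ is drawn from a rapidly increasing sequence, with all the $\ell_e$ distinct. Making the lengths large and generic serves two purposes. First, almost every vertex of $G'$ then has degree two and lies in the interior of a long induced path, so the robber is, most of the time, ``committed'' to a segment with only two available directions. Second, genericity is meant to guarantee a separation property: no two distinct vertices of $G'$ have the same distance to every other vertex, and, more usefully, any bounded set of vertices can be resolved by a single sufficiently distant probe.

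The single cop's strategy then has two phases. In the \emph{acquisition} phase, the cop probes $r$ to learn $d(r,\text{robber})$, which confines the robber to the sphere $S(r,d)$; because $G'$ is locally finite this sphere is finite. The cop then performs a search that repeatedly splits the current (finite) candidate set: probing deep inside one branch direction of the subdivided spanning tree separates the candidates lying in that direction from the rest, since a probe placed far along a segment assigns small distances to candidates on the near side and large distances to all candidates reached only through the far end of that segment. Iterating this splitting is intended to drive the candidate set down to those lying on a single segment of $G'$.

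In the \emph{tracking} phase the cop knows the robber is confined to one long segment between two consecutive branch vertices. Probing an endpoint of that segment returns the robber's exact offset along it, pinning the candidate set to a single vertex and capturing the robber---unless the robber has meanwhile moved through a branch vertex into an adjacent segment. The key quantitative input is that branch vertices are separated by long subdivided edges, so the cop can complete each splitting or pinning step in fewer rounds than it takes the robber to traverse a segment and reach the next branch vertex. In particular, whenever the robber tries to preserve ambiguity by hovering near a branch vertex with candidates on two incident segments, the cop probes deep along one of those two segments: the robber is then either caught on the probed segment or identified as the unique surviving candidate on the other, so the candidate set cannot remain of size at least two indefinitely.

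The main obstacle I anticipate is controlling the candidate set on an infinite graph: the robber may flee toward an end, and the initial sphere $S(r,d)$, although finite, can be large, so one must show that the cop's splitting keeps pace with the robber's flight. This is where the freedom in choosing the subdivision lengths is essential---each $\ell_e$ must be taken large enough, relative to the number of branch vertices in the relevant ball and hence the number of rounds the search requires, that the robber stays confined to a bounded set of segments throughout each phase; and the lengths must be generic enough that every bounded candidate configuration, including those produced by cycles arising from non-tree edges of $G$, is resolvable by a single far probe. Making these two requirements simultaneously precise, and verifying that the splitting-then-pinning procedure terminates with a singleton candidate set after finitely many rounds, is the technical heart of the argument.
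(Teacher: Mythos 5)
Your overall plan---subdivide heavily so that the cop can out-search the robber, localize the robber with a probe at a fixed root, then search the resulting bounded region---has the same shape as the paper's argument, but the step you explicitly defer as ``the technical heart'' is the whole difficulty, and the devices you propose do not supply it. Two concrete problems. First, the separation property you want from genericity is not available: a single probe, however distant, partitions candidates only by their distance to it, and two candidates reached through different endpoints of their respective segments can be equidistant from any one probe vertex no matter how the lengths $\ell_e$ are chosen, because the offsets of the candidates along their segments range over all values and so distinctness of the $\ell_e$ cannot prevent the coincidence $d(p,x)+a = d(p,y)+b$. Hence ``any bounded candidate configuration is resolvable by a single far probe'' is false, your acquisition phase necessarily takes many rounds, and during those rounds the robber moves, so the candidate set does not remain the finite sphere $S(r,d)$ you start from. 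Second, and relatedly, your splitting search presupposes that the robber cannot migrate between branches while you are splitting, but you provide no mechanism for detecting or preventing such migration; this is precisely the point at which a naive search on an infinite graph fails.

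The paper's proof supplies exactly this mechanism. It first subdivides every edge of $G$ once, so that the BFS layers $L_i$ around the root $v$ have the property that every edge joins consecutive layers; it then subdivides each $L_i$--$L_{i+1}$ edge by $s_i = 2 + |L_i| + |L_{i+1}|$ additional vertices, a length calibrated to the sizes of the two layers it joins rather than a generic length. The initial probe at $v$ confines the robber to the subdivided edges between $L_{i-1}$ and $L_i$, and---crucially---the robber cannot change which subdivided edge it occupies without passing through a vertex of $L_{i-1}\cup L_i$, an event the cop detects because it re-probes $v$ every other round and watches for the returned distance to come within $1$ of $D_{i-1}$ or $D_i$. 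While that has not happened the robber is frozen on a single (unknown) subdivided path, and the choice of $s_i$ guarantees the cop has enough intervening rounds to probe all of $L_i$ and $L_{i-1}$, identify that path, and catch the robber on it; if it does happen, the robber is within distance $1$ of a known finite layer and a short second phase finishes the job. To complete your write-up you would need an analogue of this alternating ``radar'' probe at the root together with subdivision lengths tied to layer sizes; genericity of the lengths and the restriction to a spanning tree are neither necessary nor sufficient, and the non-tree edges are handled for free once one works with BFS layers of the whole once-subdivided graph.
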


\begin{proof}
We first subdivide every edge of $G$ once to obtain $H=G^{1/2}$. Choose $v \in V(H)$ arbitrarily and let $L_i$ be the set of vertices of $V(H)$ at distance exactly $i$ from $v$. Note that $H$ is bipartite, and thus, for every edge $e \in E(H)$, there is $ i \ge 0$ such that $e$ is incident to one vertex in $L_i$ and one vertex in $L_{i+1}$. We subdivide each edge of $H$ between $L_i$ and $L_{i+1}$ by adding
\[ %s_i = 2+\sum_{j=0}^{i+1} 2|L_j| 
    s_i = 2 + |L_i| + |L_{i+1}|
\]
vertices and call the resulting graph $G'$. Note that in $G'$, each vertex of $L_i$ is at distance $D_i = i+ \sum_{j=1}^i s_j$ from $v$. We call vertex $u$ an $H$-\emph{neighbor} of $w$ if $u$ and $w$ are adjacent in $H$.

We claim one cop can capture the robber on $G'$.
The cop starts by probing $v$ and receives distance $d$. If $d=0$, then the robber is captured. Otherwise, there is a unique $i \ge 0$ such that
\[ D_{i-1}< d \le D_i, \]
which implies that the robber is located between $L_{i-1}$ (exclusive) and $L_{i}$ (inclusive). The strategy then proceeds in two phases.

In the first phase, the cop alternates between probing $v$ and a vertex in either $L_i$ or $L_{i+1}$; first probing all vertices of $L_i$, then probing all vertices of $L_{i-1}$ with no vertex probed twice. If the probe at $v$ ever returns some $d$ such that $|d - D_{i-1}| \leq 1$ or $|d - D_i| \leq 1$, then the cop moves on to the second phase. Note that this will happen as soon as the robber visits $S_{i-1}$ or $S_i$; in other words, as long as the cop has not moved on to the second phase, the robber remains in the same path connecting a vertex of $L_{i-1}$ to a vertex of $L_i$. This implies that the robber will be caught since the endpoints of this path are the only vertices where the probe returns a distance $d < s_i$.

Hence, we may assume that the cop moves on to the second phase after finitely many steps. Without loss of generality, assume that a cop probe at $v$ returned $d$ such that $|d - D_i| \leq 1$. The cop then proceeds to probe the vertices in $L_i$, one after the other, until a probe returns $d \leq |L_i|+1$. This is bound to happen because before starting the second phase, the robber was within distance $1$ from some vertex in $L_i$.

Let $w$ be the vertex where a probe returned $d \leq |L_i|+1$. The cop next probes $v$. If this probe returns $D_i$, then the robber is caught at $w$. If it returns $d < D_i$, then the cop proceeds by probing all $H$-neighbors of $w$ in $H_{i-1}$ until one of the probes returns a distance $d \leq s_{i-1}$; at this point, the robber is caught on a path connecting $w$ to the corresponding $H$-neighbor. Note that the robber cannot leave this path through $w$ because this would mean that the next probe returns $d = s_{i-1}$, and they cannot leave it through the leaf in $L_{i-1}$ before being caught because it takes the cop at most $L_{i-1}$ steps to probe all neighbors of $w$ in $L_{i-1}$.

Finally, if the probe at $v$ returns $d > D_i$, then an analogous argument shows that the robber is caught on some path between $w$ and one of its $H$-neighbors in $S_{i+1}$.  \end{proof}

We focused on how results for the Localization game played on finite graphs contrast with the locally finite case. It would be interesting to consider questions only non-trivial in the case of locally finite graphs. One such question is \emph{compactness}: if every finite subgraph of a graph $G$ has localization number at most $M$, can we bound $\zeta(G)$ by some integer-valued function of $M$? Theorem~\ref{thm:achievable} implies that this is not true for locally finite graphs with uncountably many ends as finite subgraphs of $T_\omega$ have localization number at most 2 and yet $\zeta(T_\omega) = \aleph_0$. However, we know of no such examples of locally finite graphs failing compactness with finitely (or even countably) many ends.

\section{Acknowledgments}
The first author was supported by a grant from NSERC.


\begin{thebibliography}{99}

\bibitem{BBHMP22} N.C.\ Behague, A.\ Bonato, M.A.\ Huggan, T.G.\ Marbach, and B.\ Pittman, The localization capture time of a graph, \emph{Theoretical Computer Science}, 911:80-91, 2022.
 
\bibitem{aB22} A.\ Bonato, \emph{An Invitation to Pursuit-Evasion Games and Graph Theory}, American Mathematical Society, Providence, Rhode Island, 2022.

\bibitem{bht} A.\ Bonato, G.\ Hahn, and C.\ Tardif, Large classes of infinite $k$-cop-win graphs, \emph{Journal of Graph Theory}, 65:334--342, 2010.

\bibitem{BHM21} A.~Bonato, M.A.\ Huggan, and T.\ Marbach, The localization number of designs, \emph{Journal of Combinatorial Designs}, 29:175--192, 2021.

\bibitem{BK20} A.\ Bonato and W.\ Kinnersley, Bounds on the localization number, \emph{Journal of Graph Theory}, 94:1--18, 2020.

\bibitem{BLNM} A.\ Bonato, F.\ Lehner, T.\ Marbach, and JD Nir, The Localization game on locally finite trees, In: \emph{Proceedings of EUROCOMB'23}.

\bibitem{BN11} A.\ Bonato and R.J.\ Nowakowski, \emph{The Game of Cops and Robbers on Graphs}, American Mathematical Society, Providence, Rhode Island, 2011.

\bibitem{BGGNS18a} B.\ Bosek, P.\ Gordinowicz, J.\ Grytczuk, N.\ Nisse, J.\ Sok\'o\l, and M.\ \'Sleszy\'nska-Nowak, Localization game on geometric and planar graphs, \emph{Discrete Applied Mathematics}, 251:30--39, 2018.

\bibitem{BGGNS18b} B.\ Bosek, P.\ Gordinowicz, J.\ Grytczuk, N.\ Nisse, J.\ Sok\'o\l, and M.\ \'Sleszy\'nska-Nowak, Centroidal localization game, \emph{Electronic Journal of Combinatorics}, 25(4):P4.62, 2018.

\bibitem{BDELM17} A.\ Brandt, J.\ Diemunsch, C.\ Erbes, J.\ LeGrand, and C.\ Moffatt, A robber locating strategy for trees, \emph{Discrete Applied Mathematics}, 232:99--106, 2017.

\bibitem{CCDEW12} J.\ Carraher, I.\ Choi, M.\ Delcourt, L.H.\ Erickson, and D.B.\ West, Locating a robber on a graph via distance queries, \emph{Theoretical Computer Science}, 463:54--61, 2012.

\bibitem{rD17} R.\ Diestel, \emph{Graph Theory}, American Springer Berlin, Heidelberg, Germany, 2017.

\bibitem{hahn} G.\ Hahn, F.\ Laviolette, N.\ Sauer, and R.E.\ Woodrow, On cop-win graphs, \emph{Discrete Mathematics}, 258:27--41, 2002. 

\bibitem{HJK18} J.\ Haslegrave, R.A.\ Johnson, and S.\ Koch, Locating a robber with multiple probes, \emph{Discrete Mathematics}, 341:184--193, 2018.

\bibitem{ILW22} M.R.\ Ivan, I.\ Leader, and M.\ Walters, Constructible graphs and pursuit, \emph{Theoretical Computer Science}, 930:196--208, 2022.

\bibitem{fL16} F.\ Lehner, Pursuit evasion on infinite graphs, \emph{Theoretical Computer Science}, 655(A):30--40, 2016.

\bibitem{sS12} S.\ Seager, Locating a robber on a graph, \emph{Discrete Mathematics}, 312:3265--3269, 2012.

\bibitem{sS14} S.\ Seager, Locating a backtracking robber on a tree, \emph{Theoretical Computer Science}, {\bf 539}:28--37, 2014.

\end{thebibliography}
\end{document}